\documentclass[11pt]{article}
\usepackage{fullpage}
\usepackage{amssymb}
\usepackage{amsmath}
\usepackage{amsthm}
\usepackage{multirow}
\usepackage{enumerate}
\usepackage{graphicx}
\usepackage{mathrsfs}
\usepackage[utf8]{inputenc}
\usepackage{array}
\usepackage{bm}
\usepackage{tikz-cd}
\usepackage{todonotes}
\usepackage{makecell}
\usepackage{hyperref}

\theoremstyle{plain}
\newtheorem{theorem}{Theorem}[section]
\newtheorem{lemma}[theorem]{Lemma}
\newtheorem{proposition}[theorem]{Proposition}

\theoremstyle{definition}
\newtheorem{definition}[theorem]{Definition}

\theoremstyle{remark}

\newcommand{\phunc}{\phi_{\bm{c}}}
\newcommand{\R}{\mathbb{R}}
\newcommand{\Q}{\mathbb{Q}}
\newcommand{\C}{\mathbb{C}}

\newcommand{\N}{\mathbb{N}}
\newcommand{\CP}{\mathbb{C}P}
\newcommand{\RP}{\mathbb{R}P}
\newcommand{\Spt}{\mathcal{S}_+(3,t)}

\newcommand{\cR}{\mathcal{R}}

\newcommand{\cRJ}{\mathcal{R}_J}

\newcommand{\cCJ}{\mathcal{C}_J}

\newcommand{\cS}{\mathcal{S}}

\DeclareMathOperator{\Proj}{Proj}

\makeatletter
\newcommand{\leqnomode}{\tagsleft@true}
\newcommand{\reqnomode}{\tagsleft@false}
\makeatother

\DeclareMathOperator{\val}{val}

\begin{document}

\title{Improved fewnomial upper bounds from Wronskians and dessins d'enfant
}
\date{}
\author{Boulos El-Hilany, Sébastien Tavenas}

\maketitle

\begin{abstract}
We use Grothendieck's dessins d'enfant to show that if $P$ and $Q$ are two real polynomials, any real function of the form $x^\alpha(1-x)^{\beta} P - Q$, has at most $\deg P +\deg Q + 2$ roots in the interval $]0,~1[$. As a consequence, we obtain an upper bound on the number of positive solutions to a real polynomial system $f=g=0$ in two variables where $f$ has three monomials terms, and $g$ has $t$ terms. The approach we adopt for tackling this Fewnomial bound relies on the theory of Wronskians, which was 
used in Koiran et.\ al.\  (J.\ Symb.\ Comput., 2015) for producing the first upper bound which is polynomial in $t$.

\smallskip
\noindent \textbf{\textit{Keywords---}} Sparse polynomial systems, Real solutions, Dessins d'Enfant.

\end{abstract}

\section{Introduction}
A classical problem in real algebraic geometry is to find upper bounds on the number of real isolated solutions of a square system of polynomial equations. In numerous instances, systems above describe models in science and engineering, in which positive solutions (i.e., those whose coordinates all have positive values) interpret problems in fields such as control theory~\cite{By89}, kinematics~\cite{BR90}, and chemical reaction networks~\cite{GH02,MFRGCSD16}. Bounding the number of positive solutions in terms of the total number of distinct monomials appearing in the polynomials was a problem considered by Kouchnirenko already in the 1970s~\cite{Ku08}. Such a task can be thought of as one possible multivariate generalization of Descartes' rule of sign which results in an upper bound for the number of positive roots of a univariate polynomial~\cite{Des1637}. In the late 1980s, Khovanskii provided the first bound on the number of positive solutions that depends on the total number of distinct exponent vectors appearing in the system with non-zero coefficients~\cite{Kh91}. Even though his bound was later improved by Bihan and Sottile~\cite{BS07}, the question of sharpness remains open~\cite{BRS08,EH18,MR3892410}. We refer the reader to~\cite{S11} for a broader exposition on the topic.

Several variations of this problem were considered throughout the literature where extra conditions can be imposed on the exponent vectors of the original system~\cite{LRW03,BBS06,bihan2011fewnomial}. In this paper, we consider one such open problem concerning real square polynomial systems of the form
\begin{align}\label{eq:sys1}
 f(x,y)=g(x,y)=0, 
\end{align} where $f$ has $t\geq 3$ non-zero terms and $g$ has three non-zero terms. Li, Rojas, and Wang~\cite{LRW03} published the first upper bound $\Spt\leq 2^t-2$ on the maximal possible number $\Spt$ of positive solutions that any system of the form~\eqref{eq:sys1} can have. A straightforward approach is to substitute one variable of the trinomial in terms of the other, and thus one can reduce~\eqref{eq:sys1} to $F(x) =0$, where $F$ is a smooth function on $]0,1[$ of the form
\begin{align}\label{eq:BigF}
  F(x)= &  \sum_{i=1}^tc_ix^{k_i}(1-x)^{l_i},
\end{align} for which all the coefficients and exponents are real. Then, the number of positive solutions of~\eqref{eq:sys1} is equal to the number of roots of  $F$ contained in $]0,1[$. The main approach used in~\cite{LRW03} is an extension of Rolle's Theorem and a recursion involving derivatives of certain analytic functions. 

The exponential upper bound  above has been reduced by Koiran, Portier, and Tavenas~\cite{KPT15} into a polynomial one. They considered an analytic function in one variable 
\begin{align}\label{eq:Tav}
 \displaystyle\sum_{i=1}^t\prod_{j=1}^m f^{\alpha_{i,j}}_j
\end{align} where all $f_j$ are real polynomials of degree at most $d$ and all the powers of $f_j$ are real. Using the \emph{Wronskian}, it was proved that the number of positive roots of~\eqref{eq:Tav} in an interval $I$ (assuming that $f_j(I)\subset\ ]0,+\infty[$) is bounded by $\frac{t^3md}{3} + 2tmd + t$. As a particular case (taking $m=2$, $d=1$ and $I=\ ]0,1[$), the function~\eqref{eq:Tav} becomes $F$, which consequently has at most $2t^3/3 + 5t$ roots in $]0,1[$.
In fact, in~\cite{KPT15}, the authors only count the number of distinct zeros, but, in the case of the intersection of one trinomial curve with a \(t\)-sparse one, one can obtain the same bound for the count with multiplicity by replacing~\cite[Theorem~9]{KPT15} with~\cite[Theorem~2]{VP75}.

The Wronskian was already used~\cite{VP75,P22,NY95} as a tool to obtain upper bounds on the number of roots of a linear combination of real functions. For example, Polya and Szego~\cite[Part V, Chapter 1, \S 7]{PS76} used it to prove Descartes' sign rule and some generalizations of it. The Wronskian of a given set \(f_1,\ldots,f_k\) of analytic functions is a function of the form
\begin{align*}
W(f_1,\ldots,f_i) = \det \begin{bmatrix}
f_1 & \ldots & f_i \\ f_1^\prime & \ldots & f_i^\prime  \\ \vdots & \vdots & \vdots \\ f_1^{(i-1)} & \ldots & f_i^{(i-1)} \\
\end{bmatrix} .
\end{align*}

The approach in~\cite{KPT15} was very recently refined by M\"uller and Regensburger~\cite{muller2023parametrized}. They obtain the better bound $\Spt\leq  t^3/3 - t^2 + (8/3)t - 2$.
In this paper, we improve again the upper bound on $\Spt$.
\begin{theorem}\label{MAIN.TH}
 $\Spt\leq t^3/3 - 3 t^2/2 + 25 t/6 - 3$.
\end{theorem}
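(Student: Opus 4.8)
The plan is to start from the reduction of~\cite{LRW03}: solving the trinomial equation $g=0$ for one variable and substituting the result into $f$ shows that the number of positive solutions of~\eqref{eq:sys1} equals the number of zeros lying in the open interval $]0,1[$ of a univariate function
\[
 h(x)=\sum_{j=1}^{t}a_{j}\,x^{k_{j}}(1-x)^{l_{j}}
\]
with real coefficients $a_j$ and real exponents $k_j,l_j$. A standard perturbation argument — move the exponent vectors $(k_{j},l_{j})$ to nearby rational vectors, to which the simple zeros of $h$ in $]0,1[$ persist — followed by the substitution $x\mapsto x^{N}$ to clear denominators and multiplication by a monomial $x^{a}(1-x)^{b}$ reduces everything to the case where $h$ is an honest polynomial with the same zeros in $]0,1[$; its degree is unbounded, so every estimate below must depend on $t$ alone.

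The core of the argument is to attach to the tuple of coefficients $\underline{\bm{c}}$ the real rational function $\phunc\colon\CP^{1}\to\CP^{1}$ built, up to a unit monomial, from $h$ — for instance $\phunc=h_{+}/h_{-}$, where $h_{+}$ and $h_{-}$ collect the terms of $h$ with positive and with negative coefficients (both of which are zero-free on $]0,1[$) — so that the solutions to be counted are exactly the solutions of $\phunc(x)=1$ with $x\in\,]0,1[$. One then analyses Grothendieck's dessin $\Gamma_{\underline{\bm{c}}}:=\phunc^{-1}(\RP^{1})\subset\CP^{1}$. Since $\phunc$ is defined over $\R$, the graph $\Gamma_{\underline{\bm{c}}}$ is stable under complex conjugation and contains $\RP^{1}$; the wanted solutions are certain vertices of $\Gamma_{\underline{\bm{c}}}$ sitting on the arc $]0,1[\subset\RP^{1}$, and their number is controlled by the faces of $\Gamma_{\underline{\bm{c}}}$ adjacent to that arc together with the number of real critical points of $\phunc$ in $]0,1[$, the places where $\phunc$ changes monotonicity. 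The structural fact that makes this usable is that $x(1-x)\,\phunc'$ is again a function of the same shape — a combination $\sum_{j}(\alpha_{j}+\beta_{j}x)\,x^{k_{j}}(1-x)^{l_{j}}$ whose support is only mildly larger than that of $h$ — so the critical locus is itself governed by combinatorial data of controlled size, and one can track how the real part of $\Gamma_{\underline{\bm{c}}}$ evolves when a single monomial of $f$ is deleted.

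Concretely I would run an induction on $t$. The base case is $t\le 3$, where $\mathcal S(3,3)=5$ is the sharp bound of~\cite{LRW03}; and one checks that the target function $P(t):=t^{3}/3-3t^{2}/2+25t/6-3$ satisfies $P(1)=0$, $P(2)=2$, $P(3)=5$, so these cases are consistent. The inductive step is to show that enlarging $f$ from $t-1$ to $t$ monomials creates at most
\[
 P(t)-P(t-1)=t^{2}-4t+6=(t-2)^{2}+2
\]
new solutions in $]0,1[$; summing these increments from $t=3$ upward yields the stated cubic. The value $(t-2)^{2}+2$ is exactly what one expects from the interaction of two combinatorial objects of size linear in $t$ — morally the Wronskian-type coupling of $h_{+}$ and $h_{-}$ that detects the critical points of $\phunc$ — and reading this count off the dessin rather than iterating a Rolle/Wronskian estimate as in~\cite{KPT15} is what saves the factor of two in the leading term. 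The main obstacle is precisely this inductive step: one must prove that deleting a monomial of $f$ destroys at most $(t-2)^{2}+2$ intersection points of $\Gamma_{\underline{\bm{c}}}$ with the arc $]0,1[$, which calls for a global analysis of the real dessin — its faces, the cyclic order of its vertices along $\RP^{1}$, and the parity constraints forced by $\phunc$ being real — rather than a purely local root count, while carrying enough slack through the recursion not to spoil the sharp base value at $t=3$.
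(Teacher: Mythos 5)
Your reduction to counting zeros of $h(x)=\sum_j a_j x^{k_j}(1-x)^{l_j}$ in $]0,1[$ and your arithmetic ($P(3)=5$, $P(t)-P(t-1)=(t-2)^2+2$) are both correct, but the proposal has two genuine gaps at its core. First, the object you propose to analyse with dessins d'enfants, $\phunc=h_+/h_-$, is not a rational function: $h_+$ and $h_-$ are sums of many terms $x^{k_j}(1-x)^{l_j}$ with real (or rational) exponents, and no power of their ratio is a map $\CP^1\to\CP^1$, so $\phunc^{-1}(\RP^1)$ is not a dessin and none of the graph-theoretic machinery (cycle rule, valency parity, Riemann--Hurwitz-type counts on faces) is available. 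The paper only invokes dessins at the very end of a Wronskian/Rolle descent: after $t-2$ steps the function $R_{t-1}$ is a sum of exactly \emph{two} terms of the form $x^m(1-x)^nP_1(x)+x^o(1-x)^pP_2(x)$ with $\deg P_i\le\binom{t-1}{2}$, and only for such a two-term expression does the equation become $\phi=1$ with $\phi=x^\alpha(1-x)^\beta P/Q$, whose suitable odd power is an honest rational function to which Theorem~\ref{Main.Th} ($\le\deg P+\deg Q+2$ solutions) applies. Your plan skips the step that manufactures this two-term normal form.

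Second, the inductive step you identify as "the main obstacle" is indeed the entire content of the theorem, and as stated it is not even well-posed: adding a monomial to $f$ replaces the solution set wholesale, so "at most $(t-2)^2+2$ new solutions are created" has no meaning without a mechanism relating the two solution sets. The paper's mechanism is the recursion $\mathcal{R}_i\le\mathcal{R}_{i+1}+\mathcal{W}_i+\mathcal{W}_{i-1}+1$ (Rolle applied to $R_i/W_i$, where $W_i$ is a Wronskian of the binomial-power functions $f_j$), in which each step kills one summand at the cost of the pole/zero counts $\mathcal{W}_i\le i(i-1)/2$; the cubic then comes from $\mathcal{R}_1\le\mathcal{R}_{t-1}+\mathcal{W}_{t-2}+2\sum_{i=1}^{t-3}\mathcal{W}_i+t-2$ together with the dessin bound $\mathcal{R}_{t-1}\le(t-1)(t-2)+2$ — not from a per-monomial increment of $(t-2)^2+2$ applied to extremal systems. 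Without a proof of a deletion/monotonicity lemma of the kind you invoke (and I see no route to one that avoids essentially re-deriving the Wronskian bookkeeping), the induction does not close.
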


It is straightforward to verify that for every positive integer $t$, our main result provides the sharpest upper bound on $\Spt$ so far (see also Table~\ref{tab:comparisons}). We furthermore recover the upper bound $5$ for $t=3$, discovered in~\cite{LRW03},  which was proven to be sharp by Haas in~\cite{H02}.  Moreover, it is also shown in~\cite{LRW03} that if this bound is reached by a system~\eqref{eq:sys1} of two trinomials, then the Minkowski sum $\Delta$ of the associated Newton polytopes $\Delta_1$ and $\Delta_2$ is a hexagon. 
Later, the first author provided in~\cite[Theorem 4.3]{Hil16} further necessary conditions on $\Delta$ for the above sharpness property on~\eqref{eq:sys1} to be satisfied.

\begin{table}
    \centering
    \begin{tabular}{c||c|c|c|c}
         & ~\cite{LRW03} &~\cite{KPT15} &~\cite{muller2023parametrized} & Theorem~\ref{MAIN.TH} \\[5pt]
        \Xhline{1\arrayrulewidth}
      $t$ & $2^t - 2$ & $2t^3/3 + 5t$ & $t^3/3 - t^2 + 8t/3 - 2$ & $t^3/3 - 3 t^2/2 + 25 t/6 - 3$ \\[5pt]     
      \Xhline{3\arrayrulewidth}

      $3$ & $6$ & $33$ & $6$ & $5$ \\[3pt]     
      $4$ & $14$ & $62$ & $14$ & $11$ \\[3pt]     
      $5$ & $30$ & $108$ & $28$ & $22$ \\[3pt]     
      $6$ & $62$ & $174$ & $50$ & $40$ \\[3pt]  
      \vdots &  &  &  &  \\[3pt]  
      $10$ & $1022$ & $716$ & $258$ & $222$ \\[3pt]  
    \end{tabular}
    \caption{The values are rounded up to the smaller integer}
    \label{tab:comparisons}
\end{table}

 The setup for the proof of Theorem~\ref{MAIN.TH} can be found in~\S\ref{Sec:2}: 
   We study the function $F$ using the same method as in~\cite{KPT15} i.e., we consider a recursion involving derivatives of analytic functions in one variable. In fact, we define a family of 
  Wronskian functions \(R_1,\ldots,R_t\) such that \(R_1 = F\), and the number of zeros of \(R_i\) 
  in \(]0,1[\) is bounded by a function 
  on the number of zeros of \(R_{i+1}\)
  (see Lemma~\ref{L:Recursion}). Unlike the previous approachs (e.g., in~\cite{VP75,KPT15,muller2023parametrized}), we stop at the step \(t-2\) of the recursion to
   obtain the equation 
  \begin{align}
    x^{\alpha}(1-x)^{\beta}P(x) - Q(x) = 0,
  \end{align} 
  where $\alpha,\beta\in\mathbb{R}$, and both $P$ and $Q$ are real polynomials of degree at most $\binom{t-1}{2}$. Instead of completing the usual last step of the recursion (c.f.~\cite{VP75,KPT15,muller2023parametrized}), we show the following result, which gives rise to the upper bound appearing in Theorem~\ref{MAIN.TH}.

\begin{theorem}\label{Main.Th}
Let $P$ and $Q$ be two polynomials in $\R[x]$, and let $\alpha,\beta\in\R$ be any two real values. Then, 
the number of solutions in $]0,1[$ of $x^{\alpha}(1-x)^{\beta}~P-Q$ cannot exceed $\deg P + \deg Q + 2$.
\end{theorem} 

The bulk of this paper (\S\ref{sec:dessins} and~\S\ref{sec:proof_rational}) is devoted to proving Theorem~\ref{Main.Th}. 
Methods we adopt are combinatorial in nature; as a first step, 
one can reduce to studying the number of roots in $]0,1[$ to $\phi - 1$, where $\phi:=x^{\alpha}(1-x)^{\beta}~P/Q$. In fact, we show that there is no loss of generality if one assumes that the exponents $\alpha$ and $\beta$ are rational and satisfy some properties on the parity of the numerators and denominators (this is Proposition~\ref{prp:analytic_to_rational} in~\S\ref{sec:proof_rational}). 
  Then, choosing $m\in\mathbb{N}$ such that both $m\alpha$ and $m\beta$ are integers, we get a complex meromorphic function $\varphi:=\phi^m$ which extends projectively to $\mathbb{C}P^1\longrightarrow \mathbb{C}P^1$. The inverse images of $0$, $\infty$, $1$ are given by the roots of $P$, $Q$, $\varphi - 1$, respectively, together with $0$ and $1$ (if $\alpha\beta\neq 0$). These inverse images lie on the graph $\Gamma:=\varphi^{-1}(\mathbb{R}P^1)\subset \mathbb{C}P^1$, which is an example of a Grothendieck's real \textit{dessin d'enfant} (c.f.~\cite{B07,Br06} and~\cite{O03}).
  
Combinatorics of $\Gamma$ reflect the analytic properties of $\varphi$. Namely, critical points of $\varphi$ correspond to nodes of $\Gamma$, and the number of roots of $\varphi -1$ in $]0,1[$ is controlled by the positions, and relative adjacency relations, of a certain type of vertices in $\Gamma$. The latter correspond to either critical points of $\varphi$, its roots, or its poles.~\S\ref{sec:dessins} is devoted for describing those dessins d'enfant and laying out the necessary groundwork.  In~\S\ref{sec:proof_rational}
we make an analysis on $\Gamma$ to obtain a bound on the number of those vertices that are pertinent to the existence of roots of $\varphi - 1$ in $]0,1[$. The obtained bounds are ultimately expressed in terms of $\deg P$ and $\deg Q$.

 We believe that Theorem~\ref{Main.Th} could be proved using purely analytic methods. Due to the unexpected difficulty of finding such an approach, we decided to use combinatorial tools instead.

\subsubsection*{Acknowledgements}
The authors are grateful to Fr\'ed\'eric Bihan for fruitful discussions.

\section{Proof of Theorem~\ref{MAIN.TH}}\label{Sec:2}
 Define the polynomials $f$ and $g$ of the initial system~\eqref{eq:sys1} as

\begin{align}\label{Main_Poly}  
f(u,v)=\overset{t}{\underset{i=1}{\sum}} a_iu^{\alpha_i}v^{\beta_i} \text{\quad and\quad} g(u,v)=\overset{3}{\underset{j=1}{\sum}} b_ju^{\gamma_j}v^{\delta_j},
\end{align} where all $a_i$ and $b_i$ are real.
 We suppose that~\eqref{eq:sys1} has positive solutions, thus the coefficients of $g$ have different signs. Therefore without loss of generality, let $b_1=-1$, $b_2>0$ and $b_3>0$. Since we are looking for positive solutions with non-zero coordinates, one can assume that $\gamma_1=\delta_1=0$. Theorem~\ref{MAIN.TH} holds true if the vectors $(\gamma_2,\delta_2)$ and $(\gamma_3,\delta_3)$ are collinear. Indeed, it was proved in~\cite{KPT15} that in this case, the bound on the number of positive solutions is either infinite or bounded by $2t-2$. Therefore, we assume in the rest of this section that the Newton polytope of $g$ is a triangle.

The monomial change of coordinates $(u,v)\rightarrow (x,y)$ of $(\mathbb{R}^*)^2$ defined by $b_2u^{\gamma_2}v^{\delta_2}=x$ and $b_3u^{\gamma_3}v^{\delta_3}=y$ preserves the number of positive solutions. Therefore, we are reduced to a system 

\begin{align} \label{Main.poly.modif}
 \overset{t}{\underset{i=1}{\sum}} c_ix^{k_i}y^{l_i}  =  -1  + x + y  =  0
\end{align} where $c_i$ is real for $i=1,\ldots,t$, and all $k_i$ and $l_i$ are rational numbers.

We now look for the positive solutions of~\eqref{Main.poly.modif}. It is clear that since both $x$ and $y$ are positive, then $x\in \ ]0,1[$. Substituting $1-x$ for $y$ in~\eqref{Main.poly.modif}, we get
the function $F$ in~\eqref{eq:BigF}, expressed as $\sum_{i=1}^t f_i$, where $f_i(x):= c_ix^{k_i}(1-x)^{l_i}$. 
Hence the number of positive solutions of~\eqref{eq:sys1} is equal to that of roots of $F$ in $]0,1[$. We can assume that the family \((f_1,\ldots,f_t)\) is linearly independent (if it is not the case, we can rewrite \(F\) as a linear combination of the generators of the family).

Now let us define two families of Wronskian determinants. For \(1\leq j\leq t\), 
\(
    W_j := W(f_1,f_2,\ldots,f_j)
\)
and
\(
    R_j := W(f_1,\ldots,f_{j-1},F)
\).
We also set \(W_0 := 1\).
Let us remark that 
\begin{equation*}
    R_1 = F \textrm{ and } R_t=W(f_1,\ldots,f_t).
\end{equation*} 
All these Wronskian determinants are not identically zero, since the family \((f_1,\ldots,f_t)\) is linearly independent.

In what follows, we give an explicit description of  Wronskians \(W_j\) (this description also appears in Proposition 13 of~\cite{muller2023parametrized}).
Notice that by induction on \(m \ge 0\), \( f_i^{(m)} = c_i x^{k_i-m}(1-x)^{l_i-m} p_{i,m}(x)\) where \(p_{i,m}\) is a non-zero polynomial of degree \(m\). Consequently, the \(i\)-th column of \(W_j\) can be factorized by \(x^{k_i-j+1}(1-x)^{l_i-j+1}\) and then, the \((m+1)\)-th row can be factorized by \((x(1-x))^{j-m-1}\). Hence, we obtain
\[
    W_j = x^{(\sum_1^j k_i)-\frac{j(j-1)}{2}} \cdot (1-x)^{(\sum_1^j l_i)-\frac{j(j-1)}{2}} \cdot P_{(1,\ldots,j)}(x),
\]
where \(P_{\tau}\) (\(\tau\) being a \(j\)-tuple) is the polynomial 
\[
    P_{\tau} = \det \begin{bmatrix}
        c_{\tau_i} \cdot p_{\tau_i,m}(x)
    \end{bmatrix}_{1\le i\le j, 0\le m\le j-1}
\]
of degree at most \(\binom{j}{2}\).

Now, for $j=1,\ldots,t$, let $\mathcal{R}_j$ (resp. $\mathcal{W}_j$) denote the number of roots, counted with multiplicities, of $R_j$ (resp. $W_j$) in $]0,1[$, and set $\mathcal{W}_0=0$. The numbers \(\mathcal{R}_j\)'s and \(\mathcal{W}_j\)'s are finite since the corresponding Wronskians are not identically zero. Note that $\mathcal{R}_1$ is \(\mathcal{S}_{+}(3,t)\), the number of real roots of $F$ in $]0,1[$ (for some choice of parameters \(c_i\), \(k_i\), and \(l_i\)). The previous paragraph implies \(\mathcal{W}_j \le \binom{j}{2}\).

Moreover, we have that
\begin{align*}
    R_{t-1} & = W(f_1,\ldots,f_{t-2},f_{t-1})+ W(f_1,\ldots,f_{t-2},f_{t}) \\
    & = x^{\gamma_1} (1-x)^{\delta_1} P_{(1,\ldots,t-2,t-1)} 
    + x^{\gamma_2} (1-x)^{\delta_2} P_{(1,\ldots,t-2,t)}
\end{align*}
for some real numbers \(\gamma_1,\gamma_2,\delta_1,\delta_2\). By Theorem~\ref{Main.Th}, \(R_{t-1}\) has at most \(2\binom{t-1}{2}+2\) roots on \(]0,1[\), i.e., \(\mathcal{R}_{t-1} \le (t-1)(t-2)+2\).

The next result appears in~\cite[Equation~(6)]{VP75}.
\begin{lemma}\label{L:Recursion}
For $j=1,\ldots,t-1$, we have \begin{equation}\label{eq:Recursion}
\mathcal{R}_j\leq \mathcal{R}_{j+1} + \mathcal{W}_j + \mathcal{W}_{j-1} +1.
\end{equation} 
\end{lemma}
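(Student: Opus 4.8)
The plan is to read the recursion off the Wronskian identity already derived in Section~\ref{Subsec:SumWron}. Write $W_i := W(f_1,\dots,f_i)$, with the convention $W_0 := 1$; the numerator of the formula $(R_i/W_i)' = W_i^{-2}W_{i-1}R_{i+1}$ is precisely
\[
W(W_i,R_i) \;=\; W_{i-1}R_{i+1},
\]
which is valid for $1\le i\le k-1$, hence on the whole range $1\le i\le k-2$ of the lemma. All the functions in play ($R_i$, $W_i$, $R_{i+1}$, $W_{i-1}$) are real-analytic on $I=\ ]0,1[$ and, under the standing hypothesis, have finitely many zeros there, so counting zeros with multiplicity is meaningful; I will assume in addition that none of them vanishes identically — which we may, since collapsing repeated exponent pairs and discarding vanishing coefficients makes $f_1,\dots,f_k$ linearly independent (so $W_i\not\equiv 0$), only lowers $k$, and the target bound is increasing in $k$. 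With this reduction, the lemma becomes a single Rolle-type estimate.

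The estimate I would isolate is the following: for real-analytic $u,v\not\equiv 0$ on an open interval $I$, each with finitely many zeros, one has
\[
N_I(u)\;\le\; N_I(W(v,u)) + N_I(v) + 1,
\]
where $N_I(\cdot)$ denotes the number of zeros in $I$ counted with multiplicity. To prove it, let $z_1<\dots<z_d$ be the distinct zeros of $v$ in $I$ (so $d\le N_I(v)$) and cut $I$ at them into $d+1$ open subintervals $J_0,\dots,J_d$. On each $J_r$ the function $u/v$ is analytic and not identically zero (otherwise $u\equiv 0$ on all of $I$ by analyticity), and since $v$ does not vanish on $J_r$ we have $N_{J_r}(u)=N_{J_r}(u/v)$ and, using $(u/v)'=W(v,u)/v^2$, also $N_{J_r}((u/v)')=N_{J_r}(W(v,u))$; the with-multiplicity form of Rolle's theorem then gives $N_{J_r}(u)=N_{J_r}(u/v)\le N_{J_r}((u/v)')+1=N_{J_r}(W(v,u))+1$. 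Summing over $r$ accounts for every zero of $u$ away from $\{z_1,\dots,z_d\}$. For the zeros at a point $z_j$: if $v$ vanishes there to order $\mu\ge 1$ and $u$ to order $\nu\ge 0$, then in $W(v,u)=vu'-v'u$ the term $v'u$ vanishes to order exactly $\mu+\nu-1$ while $vu'$ vanishes to order at least $\mu+\nu-1$, so $W(v,u)$ vanishes to order at least $\mu+\nu-1\ge\nu$ at $z_j$; hence the multiplicity of $u$ at $z_j$ is at most that of $W(v,u)$ at $z_j$. Adding these contributions, and using that every zero of $W(v,u)$ in $I$ lies in some $J_r$ or equals some $z_j$, yields $N_I(u)\le N_I(W(v,u)) + (d+1)\le N_I(W(v,u)) + N_I(v) + 1$.

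Applying this with $u=R_i$ and $v=W_i$, together with the identity $W(W_i,R_i)=W_{i-1}R_{i+1}$ and the additivity of $N_I$ on products of analytic functions ($N_I(W_{i-1}R_{i+1})=N_I(W_{i-1})+N_I(R_{i+1})=\mathcal W_{i-1}+\mathcal R_{i+1}$), gives exactly $\mathcal R_i\le\mathcal R_{i+1}+\mathcal W_i+\mathcal W_{i-1}+1$; for $i=1$ one uses $W_0=1$, so $\mathcal W_0=0$, and $\mathcal W_1=0$ because $W_1=f_1=a_1x^{\alpha_1}(1-x)^{\beta_1}$ has no zero in $]0,1[$. The step I expect to be the crux is the treatment of common zeros of $R_i$ and $W_i$: naively splitting $I$ at the zeros of $W_i$ literally discards those zeros of $R_i$ that coincide with them, and the whole point is that they are paid for by the extra forced vanishing of $W_{i-1}R_{i+1}$ at those points, via the local order inequality $\ord(W(v,u))\ge\ord(v)+\ord(u)-1$. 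Everything else is bookkeeping of which zeros of $W(v,u)$ have been assigned to which subinterval, plus the nondegeneracy assumption that keeps all these counts finite.
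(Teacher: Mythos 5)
Your proof is correct and follows essentially the same route as the paper: Rolle's theorem applied ``by parts'' to $R_i/W_i$ on the subintervals cut out by the zeros of $W_i$, combined with the identity $(R_i/W_i)' = W_i^{-2}\,W_{i-1}R_{i+1}$ from Subsection~\ref{Subsec:SumWron}. Your accounting for the common zeros of $R_i$ and $W_i$ via the local estimate $\ord\bigl(W(v,u)\bigr)\ge\ord(u)+\ord(v)-1$ makes rigorous a step the paper's proof labels as ``(obvious)'', but the underlying argument is the same.
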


It follows that 
\begin{align*}
    \mathcal{S}_{+}(3,t) = \mathcal{R}_1 
    & \leq \mathcal{R}_{t-1} + \mathcal{W}_{t-2} + 2\cdot \sum_{j=1}^{t-3}\mathcal{W}_j + t-2 \\
    & \leq (t-1)(t-2) + 2 + \binom{t-2}{2} + 2 \cdot \binom{t-2}{3} + t-2 \\ 
    & = \frac{t^3}{3} - \frac{3t^2}{2} + \frac{25t}{6}-3. 
\end{align*}

\section{Dessins d'enfant} \label{sec:dessins}
In this section, we introduce graphs on the Riemann sphere that completely describe rational functions up to homeomorphisms. These graphs  will be key to proving Theorem~\ref{Main.Th} later in~\S\ref{sec:proof_rational}.

 For any two coprime univariate real polynomials $A,B\in\R[x]$, consider the rational function $\varphi:=A/B\in\R(x)$.
Here and in the rest of the paper, we see the source and target spaces of $\varphi:\C\longrightarrow\C$ as the affine charts of $ \mathbb{C}P^1$ given by the non-vanishing of the first coordinate, $x_0$, of the homogeneous coordinates $(x_0:x_1)$. Furthermore, we abuse notations by allocating the same symbol $\varphi$ to denote the rational function $\mathbb{C}P^1\longrightarrow\mathbb{C}P^1$ obtained as a result of homogenization with respect to $(x_0:x_1)$.

The notations we use are taken from~\cite{B07} and for other details, see~\cite{Br06,O03,IZ18} for instance. Define $\Gamma:=\varphi^{-1}(\mathbb{R}P^1)$. 
This is a real graph on $\mathbb{C}P^1$ (invariant with respect to the complex conjugation) and which contains $\mathbb{R}P^1$. Any connected component of $\mathbb{C}P^1\setminus\Gamma$ is homeomorphic to an open disk. Consequently, each \emph{node}  (i.e. a point of valency at least \(3\)) of $\Gamma$ has even valency. Furthermore, if $a\in\CP^1$ is a critical point of $\varphi$ with real critical value (i.e. $\varphi(a)\in\R$) having multiplicity $\mu_a$, then the valency $\val(a)$ of $a$ (as a  node in $\Gamma$)  satisfies
\[
\val(a) = 2(\mu_a + 1).
\]  
Here, we say that $\varphi$ has a critical point $a\in\C$ if $\varphi'(a)=0$. The \emph{multiplicity} of $a$ is defined as the value $\mu \in\N$ such that 
\[
\varphi'(a) = \cdots = \varphi^{(\mu)}(a) = 0,
\] and $\varphi^{(\mu+1)}(a)\neq 0$. 

The graph $\Gamma$ contains the inverse images of $0$, $\infty$, and $1$. 
A {\em vertex} of \(\Gamma\) is any point in the set: 
\[
    V(\Gamma) = \{0,1,\infty\} \cup \varphi^{-1}(\{0,1,\infty\}) \cup \{ \text{nodes of }\Gamma\}.
\] 
Denote by the same letter $p$ (resp. $q$ and $r$) the vertices of $\Gamma$ which are mapped to $0$ (resp. $\infty$ and $1$). Orient the real axis on the target space via the arrows $0\rightarrow \infty\rightarrow 1\rightarrow 0$ (orientation given by the decreasing order in $\mathbb{R}$) and pull back this orientation by $\varphi$. The graph $\Gamma$ becomes an oriented graph, with the orientation given by arrows $p\rightarrow q\rightarrow r\rightarrow p$. The graph $\Gamma$ is called {\it real dessin d'enfant} associated to $\varphi$. In what follows, we will use the term \emph{dessin} for short, and \emph{real dessin} when emphasising that $\Gamma$ is invariant under the action of complex conjugation. 

A \emph{cycle} of $\Gamma$ is the boundary of a connected component of $\mathbb{C}P^1\setminus\Gamma$. Any such cycle contains the same non-zero number of letters $r$, $p$, $q$ with orientation induced from that of $\Gamma$ (see Figure~\ref{fleches}). We say that a cycle obeys the \emph{cycle rule}.

\begin{figure}[h]
\centering
\includegraphics[scale=1.5]{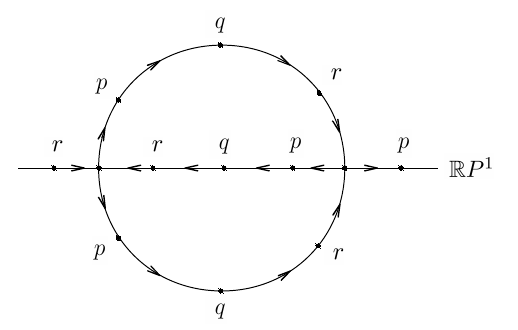}
\caption{Cycles of a dessin.}\label{fleches}
\end{figure}

Conversely, suppose we are given a real connected oriented graph $\Gamma\subset\mathbb{C}P^1$ that contains $\mathbb{R}P^1$, symmetric with respect to $\mathbb{R}P^1$ and having vertices of even valency, together with a real continuous map $\psi:\Gamma\longrightarrow\mathbb{R}P^1$. Suppose that 
each cycle 
contains the same non-zero number of vertices labeled $p$, $q$ and $r$ satisfying the cycle rule. Choose coordinates on the target space $\C P^1$. Choose one connected component of  $\C P^1 \setminus \Gamma$ and send it holomorphically to one connected component of $\C P^1 \setminus \R P^1$ so that vertices with labels $p$ are sent to $(1:0)$, those with label $q$ are sent to $(0:1)$ and those with label $r$  to $(1:1)$. Do the same for each connected component of $\C P^1 \setminus \Gamma$ so that the resulting homeomorphisms extend to an orientation preserving continuous map $\psi: \C P^1 \rightarrow \C P^1$. Note that two adjacent connected components of $\C P^1 \setminus \Gamma$ are sent to different connected components of $\C P^1 \setminus \R P^1$. The Riemann Uniformization Theorem implies that $\psi$ is a real rational map for the standard complex structure on the target space and its pull-back by $\psi$ on the source space.

 Since the graph is invariant under complex conjugation, it is determined by its intersection with one connected component $H$ (for half) of $\mathbb{C}P^1\setminus\mathbb{R}P^1$. In some figures we will only show one half part $H\cap\Gamma$ together with $\mathbb{R}P^1=\partial H$ represented as a horizontal line.

\begin{definition}\label{Def_Neighbors}

    A vertex $v$ is called \emph{special} if it is labeled by either the letter $p$ or $q$. Otherwise, it is called \emph{non-special}. Furthermore, we call $v$ \emph{real} if it belongs to $\R P^1$. Finally, we say that two nodes $a$ and $b$ are \emph{neighbors} if there is a branch of $\Gamma\setminus\mathbb{R}P^1$ joining them such that this branch does not contain any vertices of $\Gamma$ other than $a$ or $b$.
\end{definition}

\subsection{Simple dessins}

In this section, we work with a particular family of dessins.

\begin{definition}\label{def:simple_pair}
A dessin $\Gamma\subset\C P^1$ is called \emph{simple} if it satisfies the following two properties
\begin{itemize}
    \item[\textbf{(i)}] Each non-special node has valency exactly four and is not labeled by \(r\).
    \item[\textbf{(ii)}] All \textit{real} non-special nodes of \(\Gamma\) are neighbors to real nodes.
\end{itemize}
Moreover if \(J\) is a strict open interval of \(\RP^1\), then a simple dessin \(\Gamma\) is called \emph{simple with respect to }\(J\) if
\begin{itemize}   
    \item[\textbf{(iii)}]  All special vertices of $\Gamma\setminus(\RP^1\setminus J)$ have the same valency $2m$ (for some integer \(m\)).
\end{itemize}

\end{definition} 

 We introduce a sequence of transformations that deforms any dessin $\Gamma$ to a simple dessin $\tilde{\Gamma}$. Moreover, if \(\Gamma\) is the dessin of a rational function whose zeros and poles of \(\CP^1 \setminus\{0,1,\infty\}\) have multiplicity a multiple of \(m\), then the new dessin \(\tilde{\Gamma}\) is simple with respect to the interval \(]0,1[\). Finally, each transformation does not reduce the number of \textit{real} vertices labeled by $r$ of $\Gamma$, and even of \(]0,1[\) (c.f.\ the next two subsections). Therefore, to prove Theorem~\ref{Main.Th}, it suffices to consider a simple dessin with respect to \(]0,1[\).

 We divide the set of transformations into two types. The first type, called type \textbf{(I)}, reduces the valencies of all critical points so they verify the conditions \textbf{(i)} and \textbf{(iii)}. The second type, called type \textbf{(II)}, turns a couple of conjugate vertices $(p,\overline{p})$ (resp. $(q,\overline{q})$, $(r,\overline{r})$, non-special critical points) into one vertex $\tilde{p}$ (resp. $\tilde{q}$, $\tilde{r}$, non-special critical point) which is real.

\subsubsection*{Transformation of type (I)}
Consider a node \(\alpha\) of \(\Gamma\) which does \textit{not} belong to $\{0,1,\infty\}$. The following transformation on $\alpha$ will be simultaneously performed on $\overline{\alpha}$ whenever $\alpha\notin\mathbb{R}P^1$.

Let $\mathcal{U}_\alpha$ be a small neighborhood of $\alpha$ in $\mathbb{C}P^1$ such that $\mathcal{U}_{\alpha}\setminus \{\alpha\}$ does not contain any vertices.

 Assume that $\alpha$ is a special node of valency $2km$ for some integer $k \ge 2$. We transform $\Gamma$ inside $\mathcal{U}_\alpha$ as in Figure~\ref{fig:TypeIa}. In the new graph $\Gamma'$, the neighborhood $\mathcal{U}_\alpha$ contains two real special vertices and a real non-special node (and no other vertices). If $\alpha$ is labeled by \(p\) (resp. \(q\)) then both special points are labeled by \(p\) (resp. \(q\)) with valencies $2m$ and $2(k-1)m$. Moreover, the new non-special node has valency $4$. It is obvious that the resulting graph $\Gamma'$ is still a real dessin. We can notice that the number of branches at $\partial \mathcal{U}_\alpha $ is unchanged $(2m-1) + (4 - 2) + (2(k-1)m-1) = 2km$. 
 Assume that $\alpha$ is a non-special node labeled by $r$ of valency $2k$ with $k\geq 2$. We transform the graph $\Gamma$ as in Figure~\ref{Deform_real_r}. In the new graph $\Gamma'$, the neighborhood $\mathcal{U}_{\alpha}$ contains two vertices labeled by $r$ of valency $2(k-1)$ and $2$ respectively, and one non-special node of valency $4$.

 Assume that $\alpha$ is a non-special node that is \textit{not} labeled by $r$ and of valency $2k$ with $k\geq 3$. We transform the graph $\Gamma$ such that in the new graph $\Gamma'$, the neighborhood $\mathcal{U}_{\alpha}$ contains two non-special nodes of valency $4$ and $2(k-1)$.

\begin{figure}
    \centering
    \tikzset{every picture/.style={line width=0.75pt}} 

\begin{tikzpicture}[x=0.75pt,y=0.75pt,yscale=-1,xscale=1]

\draw [line width=1.5]    (153.87,153.53) .. controls (165.77,176.62) and (203.11,171.41) .. (219.33,170.97) ;
\draw [line width=1.5]    (190.13,236.97) .. controls (150.93,223.77) and (141.73,173.77) .. (153.87,153.53) ;
\draw [line width=1.5]    (153.87,153.53) .. controls (152.93,189.77) and (196.53,187.37) .. (215.73,187.37) ;
\draw [line width=1.5]    (153.72,252.07) .. controls (130.53,222.57) and (138.13,166.57) .. (153.72,153.86) ;
\draw [line width=1.5]    (88.93,171.37) .. controls (120.53,176.57) and (138.93,164.17) .. (153.72,153.56) ;
\draw [line width=1.5]    (93.73,196.17) .. controls (120.13,186.57) and (135.33,172.17) .. (153.72,153.86) ;

\draw [line width=1.5]    (153.67,153.73) .. controls (165.57,130.64) and (202.91,135.85) .. (219.13,136.3) ;
\draw [line width=1.5]    (189.93,70.3) .. controls (150.73,83.5) and (141.53,133.5) .. (153.67,153.73) ;
\draw [line width=1.5]    (153.67,153.73) .. controls (152.73,117.5) and (196.33,119.9) .. (215.53,119.9) ;
\draw [line width=1.5]    (153.52,55.2) .. controls (130.33,84.7) and (137.93,140.7) .. (153.52,153.4) ;
\draw [line width=1.5]    (88.73,135.9) .. controls (120.33,130.7) and (138.73,143.1) .. (153.52,153.71) ;
\draw [line width=1.5]    (93.53,111.1) .. controls (119.93,120.7) and (135.13,135.1) .. (153.52,153.4) ;

\draw  [color={rgb, 255:red, 155; green, 155; blue, 155 }  ,draw opacity=1 ] (87,153.86) .. controls (87,99.62) and (116.87,55.66) .. (153.72,55.66) .. controls (190.57,55.66) and (220.44,99.62) .. (220.44,153.86) .. controls (220.44,208.1) and (190.57,252.07) .. (153.72,252.07) .. controls (116.87,252.07) and (87,208.1) .. (87,153.86) -- cycle ;
\draw [line width=1.5]    (87,153.26) -- (220.44,153.86) ;
\draw  [fill={rgb, 255:red, 0; green, 0; blue, 0 }  ,fill opacity=1 ] (149.22,153.56) .. controls (149.22,151.63) and (150.79,150.06) .. (152.72,150.06) .. controls (154.66,150.06) and (156.22,151.63) .. (156.22,153.56) .. controls (156.22,155.49) and (154.66,157.06) .. (152.72,157.06) .. controls (150.79,157.06) and (149.22,155.49) .. (149.22,153.56) -- cycle ;
\draw [color={rgb, 255:red, 155; green, 155; blue, 155 }  ,draw opacity=1 ]   (244.44,142.74) .. controls (274.17,143) and (269.96,161.32) .. (299.27,162.68) ;
\draw [shift={(301.11,162.74)}, rotate = 181.22] [fill={rgb, 255:red, 155; green, 155; blue, 155 }  ,fill opacity=1 ][line width=0.08]  [draw opacity=0] (12,-3) -- (0,0) -- (12,3) -- cycle    ;
\draw  [color={rgb, 255:red, 155; green, 155; blue, 155 }  ,draw opacity=1 ] (322,153.86) .. controls (322,99.62) and (351.87,55.66) .. (388.72,55.66) .. controls (425.57,55.66) and (455.44,99.62) .. (455.44,153.86) .. controls (455.44,208.1) and (425.57,252.07) .. (388.72,252.07) .. controls (351.87,252.07) and (322,208.1) .. (322,153.86) -- cycle ;
\draw [line width=1.5]    (322,153.26) -- (455.44,153.86) ;
\draw  [fill={rgb, 255:red, 0; green, 0; blue, 0 }  ,fill opacity=1 ] (403.83,152.58) .. controls (403.83,150.64) and (405.4,149.08) .. (407.33,149.08) .. controls (409.27,149.08) and (410.83,150.64) .. (410.83,152.58) .. controls (410.83,154.51) and (409.27,156.08) .. (407.33,156.08) .. controls (405.4,156.08) and (403.83,154.51) .. (403.83,152.58) -- cycle ;
\draw [line width=1.5]    (407.33,152.58) .. controls (399.73,179.9) and (393.73,209.1) .. (421.33,239.5) ;
\draw [line width=1.5]    (407.33,152.58) .. controls (384.53,186.7) and (370.93,220.7) .. (398.13,251.1) ;
\draw [line width=1.5]    (325.87,186.7) .. controls (337.87,179.1) and (349.87,170.3) .. (361.07,153.5) ;
\draw [line width=1.5]    (334.27,207.9) .. controls (350.67,194.3) and (363.87,172.7) .. (361.07,153.5) ;
\draw [line width=1.5]    (407.33,152.58) .. controls (416.53,171.38) and (436.27,180.3) .. (451.87,185.5) ;
\draw [line width=1.5]    (444.27,206.3) .. controls (429.07,200.3) and (417.87,189.9) .. (407.33,152.58) ;

\draw  [fill={rgb, 255:red, 0; green, 0; blue, 0 }  ,fill opacity=1 ] (358.25,153.47) .. controls (358.25,151.95) and (359.48,150.72) .. (361,150.72) .. controls (362.52,150.72) and (363.75,151.95) .. (363.75,153.47) .. controls (363.75,154.99) and (362.52,156.22) .. (361,156.22) .. controls (359.48,156.22) and (358.25,154.99) .. (358.25,153.47) -- cycle ;
\draw [line width=1.5]    (407.33,154.1) .. controls (399.73,126.78) and (393.73,97.58) .. (421.33,67.18) ;
\draw [line width=1.5]    (407.33,154.1) .. controls (384.53,119.98) and (370.93,85.98) .. (398.13,55.58) ;
\draw [line width=1.5]    (325.87,119.98) .. controls (337.87,127.58) and (349.87,136.38) .. (361.07,153.18) ;
\draw [line width=1.5]    (334.27,98.78) .. controls (350.67,112.38) and (363.87,133.98) .. (361.07,153.18) ;
\draw [line width=1.5]    (407.33,154.1) .. controls (416.53,135.3) and (436.27,126.38) .. (451.87,121.18) ;
\draw [line width=1.5]    (444.27,100.38) .. controls (429.07,106.38) and (417.87,116.78) .. (407.33,154.1) ;

\draw [line width=0.75]    (114.57,75.06) .. controls (127.57,84.56) and (125.67,123.88) .. (152.72,153.56) ;
\draw [line width=0.75]    (201.17,84.88) .. controls (182.67,94.38) and (152.17,99.88) .. (152.72,153.56) ;

\draw [line width=0.75]    (113.86,232.94) .. controls (126.86,223.44) and (124.95,184.12) .. (152.01,154.44) ;
\draw [line width=0.75]    (200.45,223.12) .. controls (181.95,213.62) and (151.45,208.12) .. (152.01,154.44) ;

\draw [line width=0.75]    (354.57,70.45) .. controls (367.57,79.95) and (378.5,110.97) .. (384.22,153.81) ;
\draw [line width=0.75]    (434.33,83.47) .. controls (422.17,91.97) and (401.61,109.99) .. (407.33,152.83) ;

\draw [line width=0.75]    (354.43,236.37) .. controls (367.43,226.87) and (378.36,195.86) .. (384.08,153.01) ;
\draw [line width=0.75]    (434.19,223.36) .. controls (422.02,214.86) and (401.47,196.84) .. (407.19,153.99) ;

\draw (38,145.4) node [anchor=north west][inner sep=0.75pt]  [font=\small]  {$RP1$};
\draw (428.04,146.03) node [anchor=south] [inner sep=0.75pt]  [font=\scriptsize]  {$p$};
\draw (372.8,146.23) node [anchor=south] [inner sep=0.75pt]  [font=\scriptsize]  {$p$};

\end{tikzpicture}
    \caption{A transformation of type \textbf{(I)} where $\alpha$ is a real vertex labeled by $p$, $k=3$ and $m=3$.}
    \label{fig:TypeIa}
\end{figure}
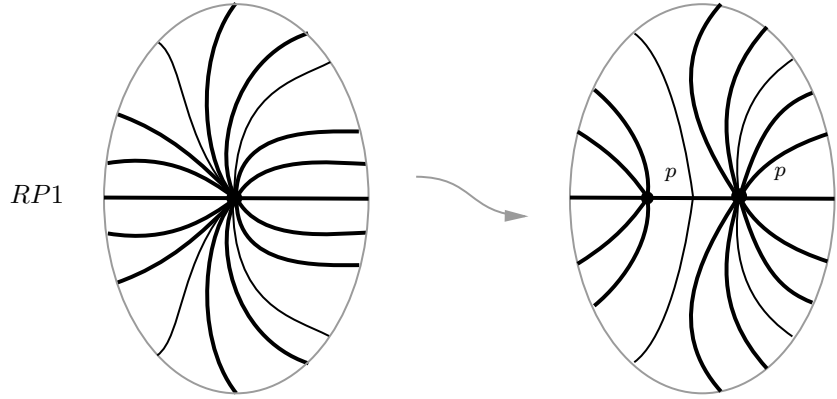

\begin{figure}
    \centering
    \tikzset{every picture/.style={line width=0.75pt}} 

\begin{tikzpicture}[x=0.75pt,y=0.75pt,yscale=-1,xscale=1]

\draw  [color={rgb, 255:red, 155; green, 155; blue, 155 }  ,draw opacity=1 ] (87,153.86) .. controls (87,99.62) and (116.87,55.66) .. (153.72,55.66) .. controls (190.57,55.66) and (220.44,99.62) .. (220.44,153.86) .. controls (220.44,208.1) and (190.57,252.07) .. (153.72,252.07) .. controls (116.87,252.07) and (87,208.1) .. (87,153.86) -- cycle ;
\draw [line width=1.5]    (87,153.26) -- (220.44,153.86) ;
\draw  [fill={rgb, 255:red, 0; green, 0; blue, 0 }  ,fill opacity=1 ] (150.22,153.56) .. controls (150.22,151.63) and (151.79,150.06) .. (153.72,150.06) .. controls (155.66,150.06) and (157.22,151.63) .. (157.22,153.56) .. controls (157.22,155.49) and (155.66,157.06) .. (153.72,157.06) .. controls (151.79,157.06) and (150.22,155.49) .. (150.22,153.56) -- cycle ;
\draw [line width=1.5]    (153.87,153.53) .. controls (165.77,176.62) and (201.11,183.13) .. (217.33,182.68) ;
\draw [line width=1.5]    (89.33,175.54) .. controls (108.22,165.39) and (137.64,153.98) .. (153.87,153.53) ;
\draw [line width=1.5]    (153.87,153.53) .. controls (140.44,174.7) and (110.57,198.17) .. (101.9,215.06) ;

\draw [color={rgb, 255:red, 155; green, 155; blue, 155 }  ,draw opacity=1 ]   (244.44,142.74) .. controls (274.17,143) and (269.96,161.32) .. (299.27,162.68) ;
\draw [shift={(301.11,162.74)}, rotate = 181.22] [fill={rgb, 255:red, 155; green, 155; blue, 155 }  ,fill opacity=1 ][line width=0.08]  [draw opacity=0] (12,-3) -- (0,0) -- (12,3) -- cycle    ;
\draw [line width=1.5]    (153.72,153.01) .. controls (165.63,129.92) and (200.97,123.42) .. (217.19,123.87) ;
\draw [line width=1.5]    (89.19,131.01) .. controls (108.08,141.16) and (137.5,152.57) .. (153.72,153.01) ;
\draw [line width=1.5]    (153.72,153.01) .. controls (140.3,131.85) and (110.43,108.37) .. (101.76,91.48) ;

\draw  [color={rgb, 255:red, 155; green, 155; blue, 155 }  ,draw opacity=1 ] (322,153.86) .. controls (322,99.62) and (351.87,55.66) .. (388.72,55.66) .. controls (425.57,55.66) and (455.44,99.62) .. (455.44,153.86) .. controls (455.44,208.1) and (425.57,252.07) .. (388.72,252.07) .. controls (351.87,252.07) and (322,208.1) .. (322,153.86) -- cycle ;
\draw [line width=1.5]    (322,153.26) -- (455.44,153.86) ;
\draw  [fill={rgb, 255:red, 0; green, 0; blue, 0 }  ,fill opacity=1 ] (403.83,152.58) .. controls (403.83,150.64) and (405.4,149.08) .. (407.33,149.08) .. controls (409.27,149.08) and (410.83,150.64) .. (410.83,152.58) .. controls (410.83,154.51) and (409.27,156.08) .. (407.33,156.08) .. controls (405.4,156.08) and (403.83,154.51) .. (403.83,152.58) -- cycle ;
\draw [line width=1.5]    (407.33,152.58) .. controls (411.13,180.96) and (418.86,198.27) .. (438.19,219.6) ;
\draw [line width=1.5]    (407.33,152.58) .. controls (397.13,210.62) and (370.33,223.58) .. (345.33,227.91) ;

\draw  [fill={rgb, 255:red, 0; green, 0; blue, 0 }  ,fill opacity=1 ] (371.25,153.47) .. controls (371.25,151.95) and (372.48,150.72) .. (374,150.72) .. controls (375.52,150.72) and (376.75,151.95) .. (376.75,153.47) .. controls (376.75,154.99) and (375.52,156.22) .. (374,156.22) .. controls (372.48,156.22) and (371.25,154.99) .. (371.25,153.47) -- cycle ;
\draw  [fill={rgb, 255:red, 0; green, 0; blue, 0 }  ,fill opacity=1 ] (343.92,153.11) .. controls (344,152.08) and (344.91,151.31) .. (345.94,151.39) .. controls (346.97,151.47) and (347.74,152.37) .. (347.66,153.41) .. controls (347.58,154.44) and (346.68,155.21) .. (345.64,155.13) .. controls (344.61,155.05) and (343.84,154.14) .. (343.92,153.11) -- cycle ;
\draw [line width=1.5]    (329.67,109.8) .. controls (345.33,114.13) and (373.33,127.13) .. (374,153.47) ;
\draw [line width=1.5]    (329.67,197.13) .. controls (345.33,192.8) and (373.33,179.8) .. (374,153.47) ;
\draw [line width=1.5]    (407,155.02) .. controls (410.8,126.64) and (418.52,109.33) .. (437.86,88) ;
\draw [line width=1.5]    (407,155.02) .. controls (396.8,96.98) and (370,84.02) .. (345,79.69) ;

\draw (157.71,139.03) node [anchor=south] [inner sep=0.75pt]  [font=\scriptsize]  {$r$};
\draw (48,145.4) node [anchor=north west][inner sep=0.75pt]  [font=\small]  {$\RP^1$};
\draw (349.04,145.37) node [anchor=south] [inner sep=0.75pt]  [font=\scriptsize]  {$r$};
\draw (421.04,146.03) node [anchor=south] [inner sep=0.75pt]  [font=\scriptsize]  {$r$};

\end{tikzpicture}
    \caption{A transformation of type \textbf{(I)} where $\alpha$ is labeled by $r$ and $m=3$.}
    \label{Deform_real_r}
\end{figure}
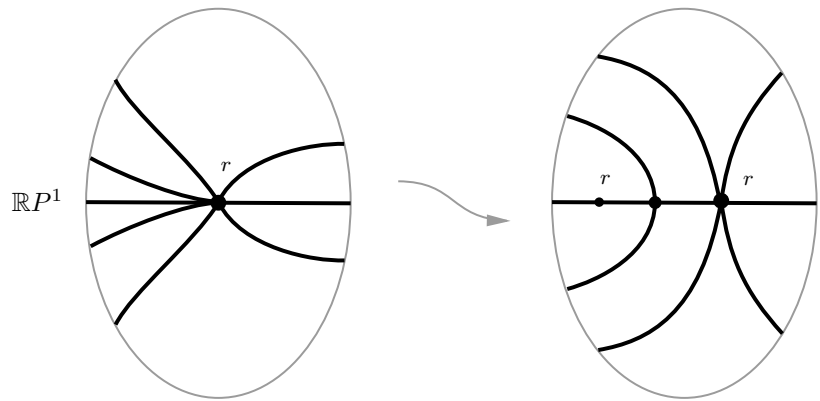

\subsubsection*{Transformation of type (II)}
Consider any vertex $\alpha\in\Gamma\setminus\mathbb{R}P^1$, together with its conjugate $\overline{\alpha}$. 
Assume that \(\alpha\) (and so \(\bar{\alpha}\)) are neighbors of a real non-special node $\sigma$ not labeled by \(r\) and of valency $4$. Consider a small neighborhood $\mathcal{U}_\sigma$ of $\sigma$ such that $\mathcal{U}_\sigma$ contains both $\alpha$ and $\bar{\alpha}$ but no other vertices. We transform $\Gamma$ into a graph $\Gamma'$ as in Figure~\ref{Compl-to-real_cropped}. In $\mathcal{U}_\sigma$, the new graph $\Gamma'$ contains only one vertex $\beta$, which retains the label of $\alpha$. Moreover, the valency of $\beta$ is equal to two times that of $\alpha$.

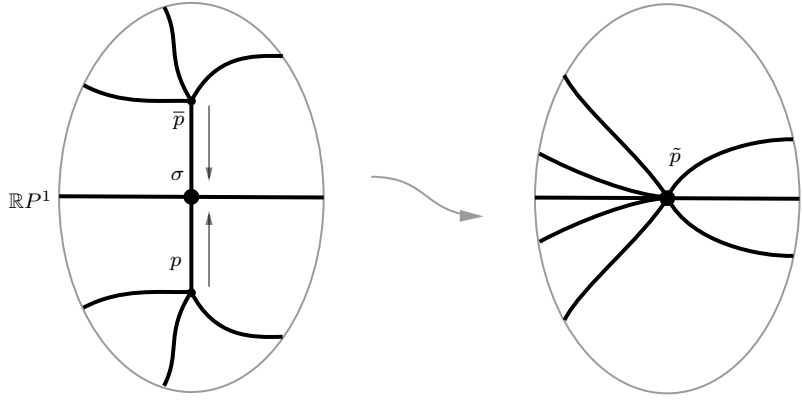
\begin{figure}
    \centering
    \tikzset{every picture/.style={line width=0.75pt}} 

\begin{tikzpicture}[x=0.75pt,y=0.75pt,yscale=-1,xscale=1]

\draw  [color={rgb, 255:red, 155; green, 155; blue, 155 }  ,draw opacity=1 ] (107,153.21) .. controls (107,98.97) and (136.87,55) .. (173.72,55) .. controls (210.57,55) and (240.44,98.97) .. (240.44,153.21) .. controls (240.44,207.44) and (210.57,251.41) .. (173.72,251.41) .. controls (136.87,251.41) and (107,207.44) .. (107,153.21) -- cycle ;
\draw [line width=1.5]    (107,152.6) -- (240.44,153.21) ;
\draw  [fill={rgb, 255:red, 0; green, 0; blue, 0 }  ,fill opacity=1 ] (170.22,152.9) .. controls (170.22,150.97) and (171.79,149.4) .. (173.72,149.4) .. controls (175.66,149.4) and (177.22,150.97) .. (177.22,152.9) .. controls (177.22,154.84) and (175.66,156.4) .. (173.72,156.4) .. controls (171.79,156.4) and (170.22,154.84) .. (170.22,152.9) -- cycle ;
\draw [line width=1.5]    (173.72,152.9) -- (173.72,104.5) ;
\draw [line width=1.5]    (173.72,201.3) -- (173.72,152.9) ;
\draw [color={rgb, 255:red, 0; green, 0; blue, 0 }  ,draw opacity=0.5 ][fill={rgb, 255:red, 128; green, 128; blue, 128 }  ,fill opacity=0.7 ][line width=0.75]    (182.63,143.27) -- (182.63,139.41) -- (182.63,106.87) ;
\draw [shift={(182.63,145.27)}, rotate = 270] [fill={rgb, 255:red, 0; green, 0; blue, 0 }  ,fill opacity=0.5 ][line width=0.08]  [draw opacity=0] (7.2,-1.8) -- (0,0) -- (7.2,1.8) -- cycle    ;
\draw [color={rgb, 255:red, 0; green, 0; blue, 0 }  ,draw opacity=0.5 ][fill={rgb, 255:red, 128; green, 128; blue, 128 }  ,fill opacity=0.7 ][line width=0.75]    (182.63,198.27) -- (182.63,161.87) ;
\draw [shift={(182.63,159.87)}, rotate = 90] [fill={rgb, 255:red, 0; green, 0; blue, 0 }  ,fill opacity=0.5 ][line width=0.08]  [draw opacity=0] (7.2,-1.8) -- (0,0) -- (7.2,1.8) -- cycle    ;
\draw  [fill={rgb, 255:red, 0; green, 0; blue, 0 }  ,fill opacity=1 ] (171.97,104.5) .. controls (171.97,103.54) and (172.76,102.75) .. (173.72,102.75) .. controls (174.69,102.75) and (175.47,103.54) .. (175.47,104.5) .. controls (175.47,105.47) and (174.69,106.25) .. (173.72,106.25) .. controls (172.76,106.25) and (171.97,105.47) .. (171.97,104.5) -- cycle ;
\draw [line width=1.5]    (173.72,104.5) .. controls (185.63,81.41) and (203.63,81.41) .. (219.85,81.86) ;
\draw [line width=1.5]    (119.41,96.52) .. controls (138.3,106.67) and (157.5,104.06) .. (173.72,104.5) ;
\draw [line width=1.5]    (173.72,104.5) .. controls (160.3,83.34) and (168.74,74) .. (160.07,57.11) ;

\draw [line width=1.5]    (173.58,200.88) .. controls (185.49,223.97) and (203.49,223.97) .. (219.71,223.52) ;
\draw [line width=1.5]    (119.26,208.86) .. controls (138.15,198.71) and (157.36,201.32) .. (173.58,200.88) ;
\draw [line width=1.5]    (173.58,200.88) .. controls (160.15,222.04) and (168.6,231.38) .. (159.93,248.26) ;

\draw  [fill={rgb, 255:red, 0; green, 0; blue, 0 }  ,fill opacity=1 ] (171.97,201.3) .. controls (171.97,200.34) and (172.76,199.55) .. (173.72,199.55) .. controls (174.69,199.55) and (175.47,200.34) .. (175.47,201.3) .. controls (175.47,202.27) and (174.69,203.05) .. (173.72,203.05) .. controls (172.76,203.05) and (171.97,202.27) .. (171.97,201.3) -- cycle ;
\draw  [color={rgb, 255:red, 155; green, 155; blue, 155 }  ,draw opacity=1 ] (347,153.86) .. controls (347,99.62) and (376.87,55.66) .. (413.72,55.66) .. controls (450.57,55.66) and (480.44,99.62) .. (480.44,153.86) .. controls (480.44,208.1) and (450.57,252.07) .. (413.72,252.07) .. controls (376.87,252.07) and (347,208.1) .. (347,153.86) -- cycle ;
\draw [line width=1.5]    (347,153.26) -- (480.44,153.86) ;
\draw  [fill={rgb, 255:red, 0; green, 0; blue, 0 }  ,fill opacity=1 ] (410.22,153.56) .. controls (410.22,151.63) and (411.79,150.06) .. (413.72,150.06) .. controls (415.66,150.06) and (417.22,151.63) .. (417.22,153.56) .. controls (417.22,155.49) and (415.66,157.06) .. (413.72,157.06) .. controls (411.79,157.06) and (410.22,155.49) .. (410.22,153.56) -- cycle ;
\draw [line width=1.5]    (413.87,153.53) .. controls (425.77,176.62) and (461.11,183.13) .. (477.33,182.68) ;
\draw [line width=1.5]    (349.33,175.54) .. controls (368.22,165.39) and (397.64,153.98) .. (413.87,153.53) ;
\draw [line width=1.5]    (413.87,153.53) .. controls (400.44,174.7) and (370.57,198.17) .. (361.9,215.06) ;

\draw [color={rgb, 255:red, 155; green, 155; blue, 155 }  ,draw opacity=1 ]   (264.44,142.74) .. controls (294.17,143) and (289.96,161.32) .. (319.27,162.68) ;
\draw [shift={(321.11,162.74)}, rotate = 181.22] [fill={rgb, 255:red, 155; green, 155; blue, 155 }  ,fill opacity=1 ][line width=0.08]  [draw opacity=0] (12,-3) -- (0,0) -- (12,3) -- cycle    ;
\draw [line width=1.5]    (413.72,153.01) .. controls (425.63,129.92) and (460.97,123.42) .. (477.19,123.87) ;
\draw [line width=1.5]    (349.19,131.01) .. controls (368.08,141.16) and (397.5,152.57) .. (413.72,153.01) ;
\draw [line width=1.5]    (413.72,153.01) .. controls (400.3,131.85) and (370.43,108.37) .. (361.76,91.48) ;

\draw (80.13,148.1) node [anchor=north west][inner sep=0.75pt]  [font=\scriptsize]  {$\RP^1$};
\draw (171.72,107.9) node [anchor=north east] [inner sep=0.75pt]  [font=\scriptsize]  {$\overline{p}$};
\draw (170.22,191.4) node [anchor=south east] [inner sep=0.75pt]  [font=\scriptsize]  {$p$};
\draw (171.72,146) node [anchor=south east] [inner sep=0.75pt]  [font=\scriptsize]  {$\sigma $};
\draw (417.71,139.03) node [anchor=south] [inner sep=0.75pt]  [font=\scriptsize]  {$\tilde{p}$};

\end{tikzpicture}
    \caption{A transformation of type \textbf{(II)} where $\alpha$ is a letter $p$, and $m=2$ .}
    \label{Compl-to-real_cropped}
\end{figure}

\subsubsection*{Reduction to simple dessins} As the subsequent section demonstrates, proving Theorem~\ref{Main.Th} ultimately boils down to bounding the number of $r$-labeled vertices of a dessin inside the interval $]0,~1[$. The following lemma ensures that we do not lose generality by considering only simple dessins.

\begin{lemma}\label{lem:to_simple}
Let $\Gamma\subset\CP^1$ be the dessin given by a rational function of the form \(x^a(1-x)^bP^m/Q^m\) with \(a,b,m\) odd integers. Then, there exists a dessin of $\tilde{\Gamma}\subset\CP^1$ 
that satisfies the following three properties:
\begin{enumerate}
    \item the dessin $\tilde{\Gamma}$ is simple with respect to the interval \(]0,1[\),

    \item the sum of the valencies of vertices labeled by $r$ in $\tilde{\Gamma}\ \cap\ ]0,1[$ is no less than that of $\Gamma\ \cap\ ]0,1[$, 

    \item the number of vertices labeled by $p$ (resp.\ by $q$) in $\tilde{\Gamma}$ is equal to the corresponding number in $\Gamma$ (where this number is counted with multiplicity: any special vertex in \(\CP^1 \setminus \{0,1,\infty\}\) of valency \(2km\) counts for \(k\)).
\end{enumerate}
\end{lemma}
\begin{proof}
    We obtain $\tilde{\Gamma}$ from $\Gamma$ by applying the two transformations \textbf{(I)} and \textbf{(II)} in the following way. 
    By making transformations of type \textbf{(I)}, we can ensure that any non-special node has valency exactly four and is not labeled by \(r\), and that all special vertices outside \(\{0,1,\infty\}\) have the same valency \(2m\). This process ensures conditions \textbf{(i)} and \textbf{(iii)}. If there is no nodes $\alpha\in\Gamma\setminus\mathbb{R}P^1$ neighbor of a non-special real node, condition \textbf{(ii)} is also satisfied, and we are done for the first point. Otherwise, we perform the transformation of type \textbf{(II)}, this creates one node in $\mathbb{R}P^1$ which can violate one of conditions \textbf{(i)} or \textbf{(iii)}. In this case, we perform a transformation of type \textbf{(I)} around this real node to recover \textbf{(i)} and \textbf{(iii)}. Repeating this process sufficiently many times gives us eventually conditions \textbf{(i)}, \textbf{(ii)} and \textbf{(iii)}. Indeed, this is so since these transformations do not give rise to vertices $\alpha\in\Gamma\setminus\mathbb{R}P^1$, and there are finitely many of them. As such points will be exhausted after sufficiently many transformations of type \textbf{(II)}, the overall sequence of transformations will ultimately finish and the resulting dessin is simple with respect to the interval \(]0,1[\).

    Finally, the transformations \textbf{(I)} and \textbf{(II)} maintain the last two points of the lemma.
\end{proof}

Note that a special vertex of \(\Gamma\) either belongs to \(\{0,1,\infty\}\), is a root of \(P\), or a root of \(Q\). Hence the number of distinct special vertices of \(\tilde{\Gamma}\) outside \(\{0,1,\infty\}\) is bounded by \(\deg(P)+\deg(Q)\).

\section{Proof of Theorem~\ref{Main.Th}}\label{sec:proof_rational}
We start by showing the following useful result. For any $r,s\in\N$, every point\\ $\bm{c}:=(a_0,\ldots,a_r,b_0,\ldots,b_s,\alpha,\beta)\in \R^{r+s+4}$ with \(\bm{a}:=(a_0,\ldots,a_r) \ne \underline{0}\) and \(\bm{b}:=(b_0,\ldots,b_s)\ne \underline{0}\) determines an analytic function on \(]0,1[\) of the form 
\begin{equation}\label{eq:phunc}
    \phunc(x)= x^\alpha~(1-x)^\beta~P/Q,
\end{equation} where $P,Q\in\R[x]$ are expressed as $P:=a_0 + a_1x~+\cdots~+~a_rx^r$, $Q:=b_0 ~+~ b_1x~+\cdots~+~b_sx^s$. In what follows, we use \(M:= \{(\bm{a},\bm{b},\alpha,\beta) \in \R^{r+s+4} \mid \bm{a}, \bm{b} \ne \underline{0}\}\) to denote the space of all such functions~\eqref{eq:phunc}.

\begin{proposition}\label{prp:analytic_to_rational}
Let $I:=\ ]0,1[$. Then, for every point ${\bm{p}}\in M$, 
there exists ${\bm{q}}:=(\bm{a},\bm{b},\alpha,\beta)\in \Q^{r+s+4} \cap M$, such that the below inequality holds
\begin{align*}
\#\left\lbrace x\in I~|~ \phi_{\bm{p}} - 1 = 0\right\rbrace & ~\leq \#\left\lbrace x\in I~|~ \phi_{\bm{q}} - 1 = 0\right\rbrace,
\end{align*} and for each fraction $\frac{i}{j}\in\{\alpha,\beta\}$, the numerator $i$ and the denominator $j$ are odd integers.
\end{proposition}

\begin{proof}
As the result becomes trivial otherwise, we assume that \(\phi_{\bm{c}}\) is not constant. 
Note that $\phi_{\bm{c}}-1$ has the same number of roots in $]0,1[$ as $x^\alpha(1-x)^\beta P - Q$. 
By making a sequence of $\delta = \deg Q +1$ derivations on the latter expression, we obtain $x^{\alpha-\delta}(1-x)^{\beta-\delta}~H$ for some polynomial $H\in\R[x]$. Since $H$ has finitely-many solutions in $]0,1[$, by Rolle Theorem, so does \(\phi_{\bm{c}}-1\).

We can assume without loss of generality that the limits of \(\phi_{\bm{c}}\) in \(0\) and \(1\) are not \(1\). Indeed, if \(\alpha \beta \ne 0\), it is enough to assume (by dividing \(P\) or \(Q\)) that for ${\bm{p}}\in M$, neither $0$, nor $1$ is a root of $P\cdot Q$. Otherwise, if \(\alpha=0\), then, it is sufficient to replace \(x\) in~\eqref{eq:phunc} by \((1-\varepsilon)x+\varepsilon\) for \(\varepsilon>0\) sufficiently small, and if \(\beta =0\), we can replace \(x\) by \((1-\varepsilon)x\).  

Let $X$ be a submanifold of $M\times I$ given by 
\begin{align*}
X & ~:=\left\lbrace (\bm{c},~x)~|~\Phi(\bm{c},~x) - 1 = 0\right\rbrace,
\end{align*} where $\Phi$ is the function $ M\times\ ]0,1[\ \longrightarrow\R$, $(\bm{c},~x) \longmapsto \phunc(x)$. Clearly, the set $X$ is smooth since the partial derivative $\partial \Phi/\partial a_0 = x^\alpha (1-x)^\beta/Q$ is a non-vanishing function on its domain $M\times I$.

Thanks to Rolle's Theorem, the restricted projection $
\left. \pi\right|_X: X \longrightarrow M$, $(\bm{c},~x) \longmapsto \bm{c}$ is a finite-to-one map with fibers having at most $r+2s$ points. Furthermore, if $\cR\subset M$ is the subset, containing the parameter tuple ${\bm{p}}$, and given by 
\begin{align*}
\cR& ~:=\left\lbrace\bm{c}~|~(P\cdot Q)(0)\neq 0 \text{ and } (P\cdot Q)(1)\neq 0\right\rbrace, 
\end{align*} then the map 
\begin{align*}
\Proj:=\left. \pi\right|_{X\cap\pi^{-1}(\cR)}: & ~X\cap\pi^{-1}(\cR) \longrightarrow \cR
\end{align*} is proper (since we assumed that the limits of \(\phi_{\bm{c}}\) in \(0\) and \(1\) are not \(1\)). Therefore, this is a branched covering of $\cR$ with some branching locus $B\subset\cR$. Consequently, to maximize the size of the fibers under $\Proj$, we  do not lose generality by assuming that ${\bm{p}}\in\cR\setminus B$. Then, all fibers in the vicinity of ${\bm{p}}$ have the same size. This finishes the proof as any neighborhood of ${\bm{p}}$ can be approximated by a point ${\bm{q}}$ with rational coordinates. Furthermore, one can choose ${\bm{q}}$ so that the odd conditions on the numerators and denominators of $\alpha$ and $\beta$ are satisfied.
\end{proof}

In what follows, we fix a function $\phi:=\phunc$ as defined in the beginning of this section for an arbitrary choice of $\bm{c}\in M$. We show that the number of roots of $\phi - 1$ in $]0,1[$ cannot exceed $\deg P + \deg Q +2$. Thanks to  Proposition~\ref{prp:analytic_to_rational}, 
we may assume that $\alpha,\beta\in\Q$ and for every $i/j\in\{\alpha,\beta\}$, each of $i$ and $j$ are odd. Clearing out the denominators of $\alpha$ and $\beta$ by raising $\phi$ to the power of some odd $m\in\N$, we obtain a rational function $\phi^m=x^a(1-x)^bP^m/Q^m\in\R(x)$, where each of $a$, $b$, and $m$ are odd. As the number of solutions in $]0,1[$ does not decrease, we prove the result for $\varphi:=\phi^m$ instead of $\phi$.
Let $\Gamma\subset\C P^1$ denote the dessin of $\varphi$. The number of vertices labeled by \(r\) in \(]0,1[\) is at least the number of roots of \(\phi-1\) there, and the number of vertices \(v\in \CP^1\setminus\{0,1,\infty\}\) labeled by \(p\) (resp. by \(q\)) equals the degree of \(P\) (resp. \(Q\)). That is, if a vertex $v$ above has valency $2km$, then it corresponds to a root of $P$ or of $Q$ (depending on its label) having multiplicity $k$.

The proof of Theorem~\ref{Main.Th} culminates in Proposition~\ref{prp:interior_exterior} below. 
Note that, if $\Gamma$ is simple w.r.t. $J$, all special vertices of \(\Gamma \setminus (\RP^1 \setminus J)\) have the same valency $2m$.  

This observation gives rise to the following important lemma for proving Proposition~\ref{prp:interior_exterior}. 

\begin{lemma}\label{L.no.p_0}
    Let \(\Gamma\) simple with respect to some interval \(J\).
    Let $\alpha, \beta \in J$ be nodes of $\Gamma$ which are neighbors. Then, we have that \(\alpha\) is special if and only if \(\beta\) is special.
\end{lemma}

\begin{proof}
We argue by contradiction. Assume that there exist $\alpha$ non-special and $\beta$ special, neighbors in \(J\). Consider such a couple \((\alpha,\beta)\), whose corresponding interval \(]\alpha,\beta[\) is the smallest possible with this property. If \(\gamma\) is a non-special node of \(]\alpha,\beta[\), then by condition \textbf{(ii)}, \(\gamma\) has a real neighbor \(\delta\) in \(]\alpha,\beta[\) which is non-special (by minimality of the couple \((\alpha,\beta)\)). 
Let \(\tilde{\mathcal{D}}\) be the closed disk containing the segment \([\gamma,\delta]\) whose border is given by the arc in \(\Gamma \setminus \RP^1\) joining \(\gamma\) to \(\delta\) and by its conjugate arc. Contracting \(\tilde{\mathcal{D}}\) to a single point does not affect (up to homeomorphism) the closure of the connected components of \(\CP^1 \setminus \Gamma\) outside \(\tilde{\mathcal{D}}\). So we can assume that \(]\alpha,\beta[\) contains only special nodes.

Consider the open disk $\mathcal{D}$ which contains $]\alpha,\beta[$ and which is bounded by the complex branch $\gamma$ of $\Gamma$ joining $\alpha$ to $\beta$ together with the conjugate branch, $\overline{\gamma}$. Since $\alpha$ and $\beta$ are neighbors, the corresponding two conjugate branches can be chosen such that they do not contain any vertices except \(\alpha\) and \(\beta\). Consider the set \(V\) of nodes in $\mathcal{D}\cup\{\beta\}$ (which are all special by hypothesis) together with the branches of $\Gamma\cap(\mathcal{D}\cup\{\beta\})$ joining them.

This gives a graph $\mathfrak{G}$ of vertices \(V\). 
Since any node in \(V\setminus \{\beta\}\) of $\Gamma$ has its neighbors in $\Gamma\cap(\mathcal{D}\cup\{\beta\}) \subset \Gamma \setminus (\RP^1\setminus J)$, its valency in $\mathfrak{G}$ is exactly \(2m\). 
Assume that \(\beta\) is labeled by \(p\) (the case where \(\beta\) is labeled by \(q\) is symmetric).
By the cycle rule of the dessins, the graph is bipartite: one part \(V_P\) corresponds to vertices labeled by $p$ and the other part, \(V_Q\), consists of vertices labeled by $q$. Then, the sum of the valencies of the vertices of \(V_Q\) (which is a multiple of \(2m\)) has to be equal to the sum of the degrees of the vertices of \(V_P\) (which is \(d_{\beta}\) plus a multiple of \(2m\)). However, it contradicts the fact that the valency \(\val(p)\) of \(\beta\) in $\mathfrak{G}$ is in \([1,2m-1]\). 
\end{proof}

The proof of Theorem~\ref{Main.Th} concludes at the end of this section, and is based on the following proposition.

\begin{proposition}\label{prp:interior_exterior} Let \(J\) be a strict open interval of \(\CP^1\).
Let $\Gamma$ be a simple dessin with respect to \(J\) as in Definition~\ref{def:simple_pair}, and let \(\cS\) be the set of the special vertices. Assume that the valency of each vertex \(x \in J\cap \cS\) is not a multiple of \(4\). 
If $\cRJ$ denotes the set of vertices of $\Gamma\cap J$ with label $r$, then the following  relation holds
\begin{equation}\label{eq:RJleqSm}
\#\cRJ \leq \#\cS - 1.
\end{equation} 
\end{proposition} 

\begin{proof}
Let $\cCJ$ denote the set of non-special nodes of \(J\). 
Since $\Gamma$ is simple, any $c\in\cCJ$ is a node in $J \setminus \cS$ of valency exactly four. By~\textbf{(i)}, we have \(\cCJ \cap \cRJ = \emptyset\).

We argue by induction on the number $k$ of points in $\cCJ$. Assume first that $k=0$. 
Thanks to the cycle rule, the dessin contains at least one vertex labeled by \(p\), and one by \(q\), so \(\#\cS \ge 2\). Thus the result holds if \(\#\cRJ \le 1\).
Between two consecutive elements \(r_1\) and \(r_2\) of \(\cRJ\), any real node is in \(\cS\) and thus has valency \(2m\) for some odd integer \(m\). Then, if $x_1,\ldots,x_s\subset J$ is the collection of those nodes, up to reversing the orientation, the segment \([r_1,r_2]\) has orientation of the form 
\[
r_1\rightarrow x_1 \rightarrow \cdots\rightarrow x_s\rightarrow r_2.
\]  Hence, the cycle rule $r\rightarrow p\rightarrow q\rightarrow r$ continues to hold for the segment \([r_1,r_2]\). 
Therefore, this segment should contain at least one vertex labelled by \(p\), and one by \(q\). Consequently, the result is immediate when \(\#\cRJ \ge 3\). 

Assume now that \(\cRJ = \{r_1,r_2\}\) (i.e., $\#\cRJ=2$). In what follows, we suppose that $\#\cS = 2$, and show that this leads to a contradiction. From assumptions above, there are exactly two special vertices in \(\cS\) which are, by the previous argument, between \(r_1\) and \(r_2\): 
\[
    r_1\rightarrow x_p \rightarrow x_q\rightarrow r_2
\] (the case where \(x_p\) and \(x_q\) are swapped is symmetric). 
Let $\sigma$ be the closest node in $\RP^1$ to the right hand side of $r_2$. By the cycle rule, the node \(\sigma\) should belong to the complementary interval \(]r_2,r_1[\  \subseteq \RP^1\) (since otherwise \(r_1\) and \(r_2\) would be adjacent). In particular, $\sigma$ is non-special, and from~\textbf{(iii)}, it has a real neighbor \(y\). The (lower) arc \(\gamma\) from \(\sigma\) to \(y\) splits the lower hemisphere $H\subset\CP^1\setminus \RP^1$ into two components \(H^1\) and $H^2$ (i.e., we have $H^1\cup\gamma\cup H^2 = H$). Then, the vertex \(y\) is the only point in \(\overline{H^1} \cap \overline{H^2}\) which can be special. In particular either \(x_p\) or \(x_q\) is not \(y\) and does not belong in some \(\overline{H^i}\). By the cycle rule, the component \(\overline{H^i}\) should contain a third special vertex. This contradicts $\#\cS=2$.

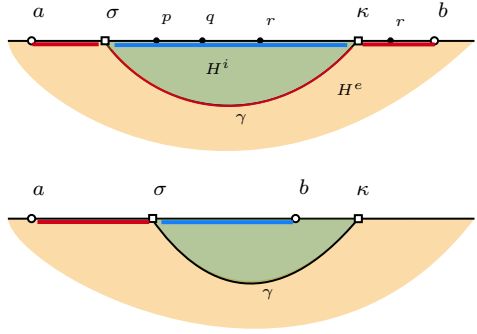
\begin{figure}
\centering
\tikzset{every picture/.style={line width=0.75pt}} 

\begin{tikzpicture}[x=0.75pt,y=0.75pt,yscale=-1,xscale=1]

\draw [fill={rgb, 255:red, 65; green, 117; blue, 5 }  ,fill opacity=0.4 ]   (143.9,40.6) .. controls (169.61,77.38) and (229.63,90.68) .. (271.63,40.68) ;
\draw [color={rgb, 255:red, 245; green, 166; blue, 35 }  ,draw opacity=0 ][fill={rgb, 255:red, 245; green, 166; blue, 35 }  ,fill opacity=0.4 ]   (125.02,40.6) .. controls (121,40.55) and (102,40.55) .. (95.44,40.68) .. controls (98.78,67.8) and (214.33,150.91) .. (330.77,40.68) .. controls (322.22,40.58) and (300.56,41.24) .. (272.19,40.68) .. controls (242.42,75.51) and (188.11,91.8) .. (143.9,40.6) ;
\draw    (95.44,40.68) -- (330.77,40.68) ;
\draw  [fill={rgb, 255:red, 0; green, 0; blue, 0 }  ,fill opacity=1 ] (169.24,40.5) .. controls (169.24,39.88) and (169.74,39.37) .. (170.37,39.37) .. controls (170.99,39.37) and (171.5,39.88) .. (171.5,40.5) .. controls (171.5,41.12) and (170.99,41.63) .. (170.37,41.63) .. controls (169.74,41.63) and (169.24,41.12) .. (169.24,40.5) -- cycle ;
\draw  [fill={rgb, 255:red, 0; green, 0; blue, 0 }  ,fill opacity=1 ] (192.4,40.5) .. controls (192.4,39.88) and (192.91,39.37) .. (193.54,39.37) .. controls (194.16,39.37) and (194.67,39.88) .. (194.67,40.5) .. controls (194.67,41.12) and (194.16,41.63) .. (193.54,41.63) .. controls (192.91,41.63) and (192.4,41.12) .. (192.4,40.5) -- cycle ;
\draw  [fill={rgb, 255:red, 0; green, 0; blue, 0 }  ,fill opacity=1 ] (221.57,40.5) .. controls (221.57,39.88) and (222.08,39.37) .. (222.7,39.37) .. controls (223.33,39.37) and (223.83,39.88) .. (223.83,40.5) .. controls (223.83,41.12) and (223.33,41.63) .. (222.7,41.63) .. controls (222.08,41.63) and (221.57,41.12) .. (221.57,40.5) -- cycle ;
\draw  [fill={rgb, 255:red, 0; green, 0; blue, 0 }  ,fill opacity=1 ] (287.29,40.5) .. controls (287.29,39.88) and (287.79,39.37) .. (288.42,39.37) .. controls (289.04,39.37) and (289.55,39.88) .. (289.55,40.5) .. controls (289.55,41.12) and (289.04,41.63) .. (288.42,41.63) .. controls (287.79,41.63) and (287.29,41.12) .. (287.29,40.5) -- cycle ;
\draw  [color={rgb, 255:red, 0; green, 0; blue, 0 }  ,draw opacity=1 ][fill={rgb, 255:red, 255; green, 255; blue, 255 }  ,fill opacity=1 ] (308.71,40.63) .. controls (308.71,39.62) and (309.54,38.8) .. (310.55,38.8) .. controls (311.56,38.8) and (312.38,39.62) .. (312.38,40.63) .. controls (312.38,41.64) and (311.56,42.46) .. (310.55,42.46) .. controls (309.54,42.46) and (308.71,41.64) .. (308.71,40.63) -- cycle ;
\draw  [color={rgb, 255:red, 0; green, 0; blue, 0 }  ,draw opacity=1 ][fill={rgb, 255:red, 255; green, 255; blue, 255 }  ,fill opacity=1 ] (105.57,40.63) .. controls (105.57,39.62) and (106.39,38.8) .. (107.4,38.8) .. controls (108.42,38.8) and (109.24,39.62) .. (109.24,40.63) .. controls (109.24,41.64) and (108.42,42.46) .. (107.4,42.46) .. controls (106.39,42.46) and (105.57,41.64) .. (105.57,40.63) -- cycle ;
\draw [color={rgb, 255:red, 245; green, 166; blue, 35 }  ,draw opacity=0 ][fill={rgb, 255:red, 245; green, 166; blue, 35 }  ,fill opacity=0.4 ]   (125.02,130.31) .. controls (121,130.26) and (102,130.26) .. (95.44,130.4) .. controls (98.78,157.51) and (247.2,240.57) .. (330.77,130.4) .. controls (322.22,130.29) and (300.56,130.96) .. (272.19,130.4) .. controls (243.2,163.63) and (204.8,180.17) .. (168,130.43) ;
\draw    (95.44,130.4) -- (330.77,130.4) ;
\draw [fill={rgb, 255:red, 65; green, 117; blue, 5 }  ,fill opacity=0.4 ]   (168,130.43) .. controls (193.7,167.22) and (230.19,180.4) .. (272.19,130.4) ;
\draw  [color={rgb, 255:red, 0; green, 0; blue, 0 }  ,draw opacity=1 ][fill={rgb, 255:red, 255; green, 255; blue, 255 }  ,fill opacity=1 ] (238.71,130.35) .. controls (238.71,129.33) and (239.54,128.51) .. (240.55,128.51) .. controls (241.56,128.51) and (242.38,129.33) .. (242.38,130.35) .. controls (242.38,131.36) and (241.56,132.18) .. (240.55,132.18) .. controls (239.54,132.18) and (238.71,131.36) .. (238.71,130.35) -- cycle ;
\draw  [color={rgb, 255:red, 0; green, 0; blue, 0 }  ,draw opacity=1 ][fill={rgb, 255:red, 255; green, 255; blue, 255 }  ,fill opacity=1 ] (105.57,130.35) .. controls (105.57,129.33) and (106.39,128.51) .. (107.4,128.51) .. controls (108.42,128.51) and (109.24,129.33) .. (109.24,130.35) .. controls (109.24,131.36) and (108.42,132.18) .. (107.4,132.18) .. controls (106.39,132.18) and (105.57,131.36) .. (105.57,130.35) -- cycle ;
\draw  [fill={rgb, 255:red, 255; green, 255; blue, 255 }  ,fill opacity=1 ] (166.69,128.53) -- (170.24,128.53) -- (170.24,132.09) -- (166.69,132.09) -- cycle ;
\draw  [fill={rgb, 255:red, 255; green, 255; blue, 255 }  ,fill opacity=1 ] (270.41,128.62) -- (273.97,128.62) -- (273.97,132.17) -- (270.41,132.17) -- cycle ;
\draw [color={rgb, 255:red, 208; green, 2; blue, 27 }  ,draw opacity=1 ][line width=1.5]    (107.91,42.38) -- (141.24,42.38) ;
\draw [color={rgb, 255:red, 208; green, 2; blue, 27 }  ,draw opacity=1 ][line width=1.5]    (274.41,42.46) -- (310.75,42.46) ;
\draw [color={rgb, 255:red, 22; green, 126; blue, 247 }  ,draw opacity=1 ][line width=1.5]    (149.24,42.78) -- (266.58,42.78) ;
\draw [color={rgb, 255:red, 208; green, 2; blue, 27 }  ,draw opacity=1 ][line width=1.5]    (110.35,132.09) -- (166.69,132.09) ;
\draw [color={rgb, 255:red, 22; green, 126; blue, 247 }  ,draw opacity=1 ][line width=1.5]    (172.88,132.18) -- (239.21,132.18) ;
\draw [color={rgb, 255:red, 208; green, 2; blue, 27 }  ,draw opacity=1 ]   (147.42,45.01) .. controls (175.67,79.01) and (228.42,87.26) .. (268.92,44.01) ;
\draw  [fill={rgb, 255:red, 255; green, 255; blue, 255 }  ,fill opacity=1 ] (142.69,38.82) -- (146.24,38.82) -- (146.24,42.38) -- (142.69,42.38) -- cycle ;
\draw  [fill={rgb, 255:red, 255; green, 255; blue, 255 }  ,fill opacity=1 ] (270.41,38.9) -- (273.97,38.9) -- (273.97,42.46) -- (270.41,42.46) -- cycle ;

\draw (148.36,30.44) node [anchor=south] [inner sep=0.75pt]  [font=\scriptsize]  {$\sigma $};
\draw (274.47,30.14) node [anchor=south] [inner sep=0.75pt]  [font=\scriptsize]  {$\kappa $};
\draw (175.54,33.47) node [anchor=south] [inner sep=0.75pt]  [font=\tiny]  {$p$};
\draw (197.5,33.72) node [anchor=south] [inner sep=0.75pt]  [font=\tiny]  {$q$};
\draw (228,34.31) node [anchor=south] [inner sep=0.75pt]  [font=\tiny]  {$r$};
\draw (201.33,58.98) node [anchor=south] [inner sep=0.75pt]  [font=\tiny]  {$H^{i}$};
\draw (268.15,69.59) node [anchor=south] [inner sep=0.75pt]  [font=\tiny]  {$H^{e}$};
\draw (293.43,34.89) node [anchor=south] [inner sep=0.75pt]  [font=\tiny]  {$r$};
\draw (314.86,29.89) node [anchor=south] [inner sep=0.75pt]  [font=\scriptsize]  {$b$};
\draw (111.05,30.03) node [anchor=south] [inner sep=0.75pt]  [font=\scriptsize]  {$a$};
\draw (213.14,76.31) node [anchor=north] [inner sep=0.75pt]  [font=\tiny]  {$\gamma $};
\draw (172.36,120.15) node [anchor=south] [inner sep=0.75pt]  [font=\scriptsize]  {$\sigma $};
\draw (274.47,119.85) node [anchor=south] [inner sep=0.75pt]  [font=\scriptsize]  {$\kappa $};
\draw (244.86,119.6) node [anchor=south] [inner sep=0.75pt]  [font=\scriptsize]  {$b$};
\draw (111.05,119.74) node [anchor=south] [inner sep=0.75pt]  [font=\scriptsize]  {$a$};
\draw (226.64,172.42) node [anchor=south] [inner sep=0.75pt]  [font=\tiny]  {$\gamma $};

\end{tikzpicture}
\caption{Examples of a splitting of $H$ into the two open discs $H^1$ and $H^2$. 
Depending whether or not $\sigma$ is in $J$, each of the intervals in red and 
blue satisfy the induction hypothesis of the Proof of Proposition~\ref{prp:interior_exterior}.}\label{fig:dessins_splitting}
\end{figure}

Assume that~\eqref{eq:RJleqSm} holds true whenever $\#\cCJ=k-1$. We will show that this is true for $\#\cCJ=k$. Thanks to Condition~\textbf{(ii)} of $\Gamma$ being simple, any point $\sigma\in\cCJ$ is a neighbor to a node $\kappa\in\Gamma\cap\RP^1$.

Similarly as above, the lower arc $\gamma\subset\Gamma$ joining $\sigma$ to $\kappa$ splits $H\subset\CP^1\setminus \RP^1$ into two components $H^1$ and $H^2$ (see Figure~\ref{fig:dessins_splitting}). If \(\sigma\) and \(\kappa\) belong to \(J\), the first component \(H^1\) is the one whose border contains the subsegment of \(J\) delimited by \(\sigma\) and \(\kappa\). If \(\kappa \notin J\), the roles of \(H^1\) and \(H^2\) are symmetric.

In turn, points in $\cRJ$ are split into two disjoint subsets, denoted by $\cRJ^1:=\cRJ\cap \overline{H^1}$, and $\cRJ^2:=\cRJ\cap \overline{H^2}$. Write $J$ as the interval $]a,b[$ for some $a<b$. 
The decomposition splits into two dessins $\Gamma=\Gamma^1\cup\Gamma^2$, where \(\Gamma^1\) lies in the closure of \(H^1\), and \(\Gamma^2\) in the closure of \(H^2\). Let \(\mathcal{S}^i\) be the set of special nodes of \(\Gamma^i\).
We consider two situations; whether $\kappa\in J$, or $\kappa\not\in J$.

Assume first that $\kappa\in J$. Without loss of generality, we assume furthermore that $\sigma<\kappa$. 
By Lemma~\ref{L.no.p_0}, $\kappa\in\cCJ$ as well. In particular, both sets $\cS^1:=\cS\cap \overline{H^1}$, and $\cS^2:=\cS\cap \overline{H^2}$ are disjoint. Moreover the valency of the points of \(\cS\) is unchanged. 

Therefore, applying the induction hypothesis on the dessin \(\Gamma^1\) with \(J^1 =\  ]\sigma,\kappa[\), and on \(\Gamma^2\) with \(J^2 =\ ]a,\sigma]\cup\gamma\cup[\kappa,b[\), we get the following relations
\begin{equation}
\label{eq:main_relations}
\begin{array}{cccclll}
	\#\cRJ^1 &\leq & \#\cS^1-~1\\[4pt]
	\#\cRJ^2 &\leq & \#\cS^2-~1,
\end{array}
 \end{equation} 
 which imply the proposition.

Assume now that $\kappa\not\in J$ (see Figure~\ref{fig:dessins_splitting}). If \(\kappa \in \cCJ\), we apply the same argument with the intervals \(J^1 =\ ]\sigma,b[\) and \(J^2 =\ ]a,\sigma[\). If \(\kappa \in \cS\), we do the same approach but we duplicate \(\kappa\) in \(\cS^1\) and \(\cS^2\) (since \(\kappa \notin J^1 \cup J^2\), the valencies of the points in \(\cS^1 \cap J^1\) and in \(\cS^2 \cap J^2\) are still not multiples of four). In particular, we have \(\#\cS + 1 = \cS^1 + \cS^2\). By induction hypothesis, we still have~\eqref{eq:main_relations}, which terminates the induction.
\end{proof}

By Lemma~\ref{lem:to_simple}, 
we may assume that 
$\Gamma$ is simple with respect to the interval \(]0,1[\) (see Definition~\ref{def:simple_pair}).
To finish the proof of Theorem~\ref{Main.Th}, we proceed as follows. Let $\cS $ be the set $\varphi^{-1}(\{0,\infty\})$, and $J$ be the interval $]0,1[$. 
In particular, \(\mathcal{S} \subseteq \{0,1,\infty\}\cup\{z \mid P(z)=0\}\cup\{z \mid Q(z)=0\}\), and its cardinal is bounded by \(\deg(P)+\deg(Q)+3\).
Since $m$ is odd, the hypothesis of Proposition~\ref{prp:interior_exterior} is satisfied for the graph $\Gamma$. Therefore, we get 
\begin{align}\label{eq:important_inequality}
\#\{x\in\ ]0,1[\ |\ \varphi(x) = 1\} = \#\cRJ \leq\#\cS - 1\leq\deg P +\deg Q + 2.
\end{align}

\bibliographystyle{abbrv}
\def\cprime{$'$}

\end{document}